\newcommand{\vertiii}[1]{{\left\vert\kern-0.25ex\left\vert\kern-0.25ex\left\vert #1
    \right\vert\kern-0.25ex\right\vert\kern-0.25ex\right\vert}}
\renewcommand*\subjclass[2][2000]{%
  \def\@subjclass{#2}%
  \@ifundefined{subjclassname@#1}{%
    \ClassWarning{\@classname}{Unknown edition (#1) of Mathematics
      Subject Classification; using '1991'.}%
  }{%
    \@xp\let\@xp\subjclassname\csname subjclassname@#1\endcsname
  }%
}
\newtheorem{theorem}{Theorem}[section]
\newtheorem{lemma}[theorem]{Lemma}
\newtheorem*{lemma*}{Lemma}
\newtheorem{proposition}[theorem]{Proposition}
\newtheorem{definition}[theorem]{Definition}
\def\1ton{1,2,\ldots,n}
\newcommand{\R}{\mathbb{R}}
\newtheorem{remark}[theorem]{Remark}
\numberwithin{equation}{section}
\renewcommand{\imath}{i} 
\def\XXint#1#2#3{{\setbox0=\hbox{$#1{#2#3}{\int}$}
\vcenter{\hbox{$#2#3$}}\kern-.5\wd0}}
\def\ge{\geqslant}
\begin{document}

\title{Contraction property of  Fock type space of log-subharmonic functions in $\mathbb{R}^m$}

\footnote{2020 \emph{Mathematics Subject Classification}: Primary
30H20}
\keywords{Holomorphic functions, isoperimetric inequality, Fock space, contraction}
\author{David Kalaj}
\address{University of Montenegro, Faculty of Natural Sciences and
Mathematics, Cetinjski put b.b. 81000 Podgorica, Montenegro}
\email{davidk@ucg.ac.me}

\begin{abstract}

We prove a contraction property of Fock type spaces $\mathcal{L}_{\alpha}^p$ of log-subharmonic functions in $\mathbb{R}^n$.  To prove the result, we demonstrate a certain monotonic property of measures of the superlevel set of the function $u(x) = |f(x)|^p e^{-\frac{\alpha}{2} p |x|^2}$, provided that $f$ is a certain log-subharmonic function in $\mathbb{R}^m$.  The result recover a contraction property of holomorphic functions in the Fock space $\mathcal{F}_\alpha^p$ proved by Carlen in \cite{carlen}.

\end{abstract}
\maketitle
\tableofcontents
\sloppy

\maketitle
\section{Introduction}

Let $m\ge 1$ and let $\mathbb{R}^m$ be the Euclidean space endowed with the Euclidean norm: $|x|=\sqrt{\left<x,x\right>}$, where $\left<x,y\right>=\sum_{i=1}^m x_i y_i$, and $x=(x_1,\dots, x_m), y=(y_1,\dots,y_n)\in \mathbb{R}^m$. If $\alpha>0$ and $p>0$ and $m=2n$ is an even integer, we define the Fock space or Segal-Bargmann space $\mathcal{F}_\alpha^p$ (cf. \cite{bargman,bargman1, kezu}) of  entire holomorphic functions $f$ in $\mathbb{C}^n=\mathbb{R}^{2n}$ so that:
\begin{equation}\label{fockno}\|f\|^p_{p, \alpha}: =c_{p, \alpha}\int_{\mathbb{R}^m} |f(x)|^p e^{-\frac{\alpha}{2} p |x|^2} dA(x)<\infty,$$ where $$c_{p, \alpha}=\left(\frac{\alpha p}{2\pi}\right)^{\frac{m}{2}},\end{equation} and $dA(x)$ is Lebesgue measure on $\mathbb{R}^m$. Note that $c_{p,\alpha}e^{-\frac{\alpha}{2} p |x|^2} dA(x)$ is the Gaussian probabily measure in $\mathbb{R}^m$.

Assume now that $m\in\mathbb{N}$ is an arbitrary integer.
 We say that a real twice differentiable function $f$ defined in a domain $\Omega\subset \mathbb{R}^m$ is subharmonic if $\Delta  f(x)\ge 0$ for $x\in \Omega$. Here $\Delta$ is the Laplacian. This definition can also be extended to not necessary double differentiable functions, by using the sub-mean value property (\cite{hake}). We say that a mapping $f$ is log-subharmonic, if $\log |f(x)|$ is subharmonic in $\Omega\setminus f^{-1}(0)$. We denote by $\mathcal{L}^p_\alpha$ the space of complex-valued, real-analytic functions whose absolute value is a log-subharmonic function, defined in $\mathbb{R}^m$, with a finite $\|f\|_{p,\alpha}$ norm as defined in \eqref{fockno}. Here $m$ is an arbitrary positive integer. Observe that for $m=2n$ we have $\mathcal{F}_\alpha^p\subset \mathcal{L}_\alpha^p$:  If $f$ is holomorphic in $\Omega$, then $ |f(z)|$ is log-subharmonic. Indeed  $$\Delta \log |f(z)|=\sum_{k=1}^n \Delta_{z_k} \log |f(z)|=0,$$ where $z=(z_1,\dots, z_n)$, and $$\Delta_{z_k}=\frac{\partial^2}{(\partial_{x_k})^2}+ \frac{\partial^2}{(\partial_{y_k})^2},$$ $z_k=x_k+\imath y_k$ for $k=1,\dots,n$ and $z\in \Omega\setminus f^{-1}(0).$

\section{Motivation and the Main Results}
Carlen, in his paper \cite{carlen} proved the following result:
\begin{theorem}\label{kehezu} If $0<p<q<\infty$, then $\mathcal{F}^p_\alpha(\mathbb{C}^n)\subset \mathcal{F}^q_\alpha(\mathbb{C}^n)$
 and the inclusion is proper and
continuous. Moreover $$\|f\|_{q, \alpha}\le \|f\|_{p, \alpha}.$$
\end{theorem}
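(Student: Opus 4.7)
The plan is to translate the norm comparison into a single scalar inequality for the function $u(x):=|f(x)|^p e^{-\alpha p|x|^2/2}$. Since $|f(x)|^q e^{-\alpha q|x|^2/2}=u(x)^{q/p}$, one has $\|f\|_{p,\alpha}^p=c_{p,\alpha}\int u\,dA$ and $\|f\|_{q,\alpha}^q=c_{q,\alpha}\int u^{q/p}\,dA$, so the target inequality is equivalent to
\[c_{q,\alpha}\int u^{q/p}\,dA\le\left(c_{p,\alpha}\int u\,dA\right)^{q/p}.\]
As a sanity check and a guide, the reproducing kernels $f(z)=e^{\alpha\langle z,\bar z_0\rangle}$ satisfy $u(z)=\|f\|_{p,\alpha}^p\, e^{-\alpha p|z-z_0|^2/2}$, which is a translated Gaussian profile, and yield $\|f\|_{q,\alpha}=\|f\|_{p,\alpha}=e^{\alpha|z_0|^2/2}$ for every $p,q$. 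This identifies Gaussian profiles as the extremal shapes of $u$.

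To access the distribution function $\mu(t):=|\{x\in\mathbb{R}^{2n}:u(x)>t\}|$, I would exploit the sub-mean-value property of the subharmonic function $|f|^p$ (valid since $\log|f|$ is subharmonic for entire $f$): for every ball $B_r(x)\subset\mathbb{C}^n$,
\[|f(x)|^p\le\frac{1}{|B_r|}\int_{B_r(x)}|f(y)|^p\,dA(y).\]
Multiplying by $e^{-\alpha p|x|^2/2}$ and using the elementary bound $e^{-\alpha p|x|^2/2}\le e^{\alpha pr(|x|+r/2)}e^{-\alpha p|y|^2/2}$ on $y\in B_r(x)$ converts this into a local Gaussian average of $u$ itself that controls $u(x)$, which in turn should produce a monotonic estimate for $\mu(t)$. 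The two norms are then recovered via the layer cake identities
\[\int u\,dA=\int_0^\infty\mu(t)\,dt,\qquad\int u^{q/p}\,dA=\frac{q}{p}\int_0^\infty t^{q/p-1}\mu(t)\,dt,\]
and the scalar comparison above follows by integrating the monotonic bound on $\mu$ against $t^{q/p-1}\,dt$.

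The main obstacle is extracting the \emph{sharp} monotonic control on $\mu(t)$: a naive optimization over $r$ introduces dimensional constants strictly larger than $1$ and only yields continuity of the inclusion, whereas the reproducing-kernel extremals demand contraction constant exactly $1$. The correct choice of $r$ must match the intrinsic Gaussian scale $(\alpha p)^{-1/2}$, and the needed monotonicity is \emph{not} the naive pointwise bound $\mu(t)\le\mu_{\mathrm{ext}}(t)$ (easy counterexamples such as $f(z)=z$ show this can fail at small $t$), but a subtler integrated rearrangement-type statement perfectly tailored to the weight $t^{q/p-1}$ appearing in the layer cake. Once such sharp monotonicity is in hand, the contraction is immediate; the properness of the inclusion then follows from the contraction together with an explicit separating example.
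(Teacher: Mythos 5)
Your reduction is sound as far as it goes: rewriting both norms through the distribution function $\mu(t)$ of $u(x)=|f(x)|^p e^{-\alpha p|x|^2/2}$, using the layer cake identities, and recognizing that the naive pointwise comparison $\mu(t)\le\mu_{\mathrm{ext}}(t)$ is false are all consistent with how the paper proceeds. But the proposal stops exactly where the theorem begins. The entire content of the result is the ``sharp integrated rearrangement-type statement'' that you invoke but never formulate or prove; saying the contraction is immediate ``once such sharp monotonicity is in hand'' concedes the main point. Worse, the one concrete mechanism you do propose --- the sub-mean-value property of $|f|^p$ on balls $B_r(x)$ with $r$ tuned to the Gaussian scale $(\alpha p)^{-1/2}$ --- cannot produce it, as you yourself observe: any such local averaging argument loses a dimensional constant and can only give boundedness of the inclusion, never the contraction constant $1$ forced by the reproducing-kernel extremals. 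In the paper this sub-mean-value computation appears only as an auxiliary step (Proposition~\ref{ulqin}), used to guarantee that $u$ is bounded, attains its maximum, and decays at infinity, so that the superlevel sets are compact; it plays no role in the sharp estimate.

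What actually fills the gap is a level-set argument in the style of Nicola and Tilli. Theorem~\ref{monotone} proves that
$g(t)=t\exp\bigl[\tfrac{\alpha p(\Gamma(m/2))^{2/m}}{2\pi}\,\mu(t)^{2/m}\bigr]$
is decreasing on $(0,t_\circ)$: one differentiates $\mu$ by the coarea formula, applies Cauchy--Schwarz in the form $|\partial A_t|^2\le\int_{\partial A_t}|\nabla u|^{-1}dS\cdot\int_{\partial A_t}|\nabla u|\,dS$, evaluates the second factor by the divergence theorem using $\Delta\log u\ge -m\alpha p$ (this is precisely where log-subharmonicity of $|f|$ enters, globally on $A_t$ rather than on small balls), and bounds $|\partial A_t|$ from below by the isoperimetric inequality. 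The resulting differential inequality for $\mu$ integrates to the monotonicity of $g$, with equality exactly for Gaussian profiles. Even then the contraction is not immediate: the paper needs a separate comparison lemma (Lemma~\ref{lemm}), applied with $\Phi(s)=c_{p,\alpha}\bigl(\tfrac{2\pi}{\alpha(\Gamma(m/2))^{2/m}}\log s\bigr)^{m/2}$ and $\Psi=G'$, to show that among all admissible $\mu$ with $\int\mu\,dt$ fixed, the functional $\int\mu(t)G'(t)\,dt$ is maximized when $g\equiv 1$, i.e.\ by $f_a(x)=e^{\alpha\langle a,x\rangle-\frac{\alpha}{2}|a|^2}$; Theorem~\ref{kehezu} then follows by taking $G(t)=t^{q/p}$. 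So your proposal is a correct setup plus an honest description of the missing ingredient, but both substantive steps --- the isoperimetric monotonicity and the rearrangement comparison --- are absent, and the route you sketch toward the first one would not succeed.
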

Theorem~\ref{kehezu} is applied in \cite{carlen} to the coherent state transform in a new proof of Wehrl's entropy conjecture \cite{state}. In this paper, among other results, we recover Theorem~\ref{kehezu} and provide a proof that works for a more general class of mappings, namely real analytic complex mappings whose absolute value is a log-subharmonic function in $\mathbb{R}^m$ and belongs to the Fock-type space $\mathcal{L}^p_\alpha$.

Let $f$ be a real analytic complex-valued function defined in the Euclidean space $\mathbb{R}^m$, such that  $v=|f|$ is a log-subharmonic function in $\mathbb{R}^m$  and such that $u(x) = v(x)^p e^{-\alpha p/2|x|^2}$ is bounded and goes to $0$
uniformly as $|x|\to \infty$. Then the superlevel sets $A_t = \{x: u(x) > t\}$ for $t > 0$ are
compactly embedded in $\mathbb{R}^m$ and thus have finite Lebesgue  measure $\mu(t) = |A_t|$.

Those are the main results
\begin{theorem}\label{monotone}
Let $\alpha>0$ and $p> 0$ and assume that $f$ is a real analytic complex valued function such that $v=|f|:\mathbb{R}^m \to [0,+\infty)$ is a log-subharmonic function. Assume further that the function $u(x) = |f(x)|^p e^{-\frac{\alpha p}{2} |x|^2}$ is bounded and $u(x)$ tends to $0$ uniformly as $|x|\to \infty$. Then the function $$g(t) =t\exp\left[\frac{\alpha p (\Gamma(m/2))^{2/m}}{2\pi}\mu^{2/m}(t) \right],$$ is decreasing on the interval $(0, t_\circ)$, where  $t_\circ = \max_{x\in\mathbb{R}^m} u(x)$.
\end{theorem}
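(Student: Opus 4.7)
The plan is to differentiate $g$ and reduce its decrease on $(0, t_\circ)$ to a pointwise differential inequality. Logarithmic differentiation of $g(t) = t \exp\bigl[c\mu^{2/m}(t)\bigr]$ with $c = \alpha p(\Gamma(m/2))^{2/m}/(2\pi)$ shows that $g'(t) \le 0$ is equivalent to
\begin{equation*}
-\mu'(t) \ge \frac{m}{2c\,t}\,\mu(t)^{1 - 2/m}
\end{equation*}
for almost every $t \in (0, t_\circ)$. Real analyticity of $f$ together with Sard's theorem guarantees that $\{u = t\}$ is a smooth compact hypersurface for a.e.\ $t$, so the co-area formula gives $-\mu'(t) = \int_{\{u=t\}}d\sigma/|\nabla u|$.

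I would then chain three classical tools. The Cauchy--Schwarz inequality yields $P(A_t)^2 \le \bigl(\int_{\{u=t\}}|\nabla u|\, d\sigma\bigr)\bigl(\int_{\{u=t\}} d\sigma/|\nabla u|\bigr)$, the Euclidean isoperimetric inequality gives $P(A_t) \ge m\omega_m^{1/m}\mu(t)^{1-1/m}$, and the divergence theorem gives $\int_{\{u=t\}}|\nabla u|\, d\sigma = -\int_{A_t}\Delta u\, dA$. Chaining these reduces the problem to an upper bound of the form
\begin{equation*}
-\int_{A_t}\Delta u\, dA \le K\,t\,\mu(t),
\end{equation*}
where the constant $K$ is prescribed by matching with the target inequality derived above.

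Log-subharmonicity of $v$ now enters: since $p\log v$ is subharmonic, $V := v^p = e^{p\log v}$ is itself subharmonic, so $\Delta V \ge 0$. Writing $u = V G$ with $G = e^{-\alpha p|x|^2/2}$, the expansion
\begin{equation*}
\Delta u = G\,\Delta V - 2\alpha p\,(x\cdot\nabla V)\,G + u\,(\alpha^2 p^2 |x|^2 - \alpha p m)
\end{equation*}
permits me to discard the nonnegative contribution $G\,\Delta V$. Integration by parts on the middle term using $\nabla\cdot(xG) = (m - \alpha p|x|^2)\,G$, combined with $u=t$ on $\partial A_t$ and the divergence-theorem identity $\int_{\partial A_t}x\cdot\nu\, d\sigma = m\mu(t)$, produces a clean boundary contribution $2\alpha p m\,t\,\mu(t)$ alongside residual volume integrals $\int_{A_t}u\, dA$ and $\int_{A_t}|x|^2 u\, dA$ that partially cancel the remaining pointwise terms.

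The main obstacle is to control those residual volume integrals sharply enough to attain the desired constant $K$: the trivial pointwise bound $u \ge t$ on $A_t$ is too crude. My expectation is that closing this gap requires a symmetrization or rearrangement argument showing that the full chain of inequalities saturates precisely on a radial Gaussian extremal $u(x) = e^{-\gamma|x|^2/2}$, corresponding to a log-subharmonic $v(x) = e^{\beta|x|^2/2}$ for a suitable $\beta \ge 0$ so that each step becomes an equality simultaneously. Identifying this extremal and reducing the general log-subharmonic case to it, perhaps through a spherical sub-mean-value property of $V$ or a weighted Pólya--Szegő type comparison with a radial Gaussian profile, is the technical heart of the argument.
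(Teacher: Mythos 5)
Your overall skeleton --- coarea formula, Cauchy--Schwarz on $\partial A_t$, the isoperimetric inequality, and the reduction to a bound of the form $\int_{\{u=t\}}|\nabla u|\,d\sigma=-\int_{A_t}\Delta u\,dA\le K\,t\,\mu(t)$ with $K=\alpha p m$ --- is exactly the paper's, and the matching constant $K=\alpha pm$ is the right target. The genuine gap is in your proposed proof of that key bound. Using only $\Delta V\ge 0$ (subharmonicity of $V=v^p$) and discarding the term $G\,\Delta V$ is fatal: after your integration by parts, the inequality you would still need is
\begin{equation*}
m\Bigl(\int_{A_t}u\,dA-t\,\mu(t)\Bigr)\;\ge\;\alpha p\int_{A_t}|x|^2u\,dA,
\end{equation*}
and this is false in general. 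Test it on the translated extremals $f_a(x)=e^{\alpha\langle a,x\rangle-\frac{\alpha}{2}|a|^2}$, i.e.\ $u(x)=e^{-\frac{\alpha p}{2}|x-a|^2}$: the left side is independent of $a$ (both integrals are computed over the ball $A_t$ centered at $a$), while the right side equals $\alpha p\int_{A_t}|x-a|^2u\,dA+\alpha p|a|^2\int_{A_t}u\,dA$, which tends to infinity as $|a|\to\infty$. For these $f_a$ the discarded term $\int_{A_t}G\,\Delta V\,dA=\alpha^2p^2|a|^2\int_{A_t}u\,dA$ is precisely what restores the balance, so no refinement of the residual estimates can close your argument: you have thrown away information that the equality cases themselves require. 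The structural reason is that $\Delta V\ge 0$ is strictly weaker than log-subharmonicity (indeed $\Delta V=V\bigl(\Delta\log V+|\nabla\log V|^2\bigr)$), and you invoke only the weaker fact at the decisive step.

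The missing idea --- and the paper's actual proof --- is a one-line substitute, not a symmetrization. Since $u\equiv t$ on $\partial A_t$, the vector fields $\nabla u$ and $t\,\nabla\log u$ coincide on $\partial A_t$, so you may apply the divergence theorem to $t\,\nabla\log u$ instead of $\nabla u$:
\begin{equation*}
\int_{\partial A_t}|\nabla u|\,d\sigma\;=\;-t\int_{A_t}\Delta\log u\,dA\;=\;-t\int_{A_t}\bigl(p\,\Delta\log v-\alpha p m\bigr)\,dA\;\le\;\alpha p m\,t\,\mu(t),
\end{equation*}
where $\Delta\log v\ge 0$ is used pointwise and no residual volume integrals appear at all; the theorem then follows by integrating the resulting differential inequality for $\mu$, exactly as you outlined. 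So the weighted P\'olya--Szeg\H{o}/rearrangement step you anticipated is unnecessary. One further remark: your isoperimetric constant $m\omega_m^{1/m}$ is the correct sharp one; carrying it through yields the exponent constant $(\Gamma(1+m/2))^{2/m}$ in $g$ rather than the $(\Gamma(m/2))^{2/m}$ appearing in the statement (they agree only for $m=2$). The paper's own isoperimetric step contains the same slip, so on this point your version of the constant is the one to trust.
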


If  $f(x) \equiv 1$, the function $g$ turns out to be constant and this is an important property of $g$.

The proof of this theorem is mostly based on the methods developed by Nicola and Tilli in \cite{3} (see also the subsequent papers where similar methods are used: \cite{kulik}, \cite{kalaj2024}, \cite{blms}, \cite{KNOT}, and \cite{frank}).

By using Theorem~\ref{monotone} we will prove the following theorem:

\begin{theorem}\label{Fock}
Let $p>0$ and $\alpha>0$. Let $G:[0,\infty)\to \R$ be a convex function. Then the maximum value of
\begin{equation}\label{bergmanineq}
\int_{\mathbb{R}^m} G(|f(x)|^pe^{-\frac{\alpha}{2} p|x|^2})dA(x)
\end{equation}
is attained for $$f_a(x) =e^{\alpha \left<a,x\right>-\frac{\alpha}{2} |a|^2},$$ where $a\in\mathbb{C}^n$ is arbitrary, subject to the condition that $f\in \mathcal{L}^p_\alpha$ and $ \|f \|_{p, \alpha}=1$.
\end{theorem}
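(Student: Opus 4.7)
The plan is to use Theorem~\ref{monotone} to control the decreasing rearrangement of $u(x):=|f(x)|^p e^{-\frac{\alpha p}{2}|x|^2}$ and compare it, via a Hardy--Littlewood--P\'olya majorization argument, to the rearrangement of the extremal density $v(x):=|f_a(x)|^p e^{-\frac{\alpha p}{2}|x|^2}$. A direct computation shows $v$ is a translate of $v_0(x):=e^{-\frac{\alpha p}{2}|x|^2}$, so its rearrangement is translation invariant. Since $\int_{\mathbb{R}^m}G(u(x))\,dA(x)=\int_0^\infty G(u^*(s))\,ds$ depends on $u$ only through its decreasing rearrangement $u^*$, it suffices to compare $u^*$ with $v_0^*(s):=\exp(-\tfrac{\alpha p}{2}(s/V_m)^{2/m})$, where $V_m$ denotes the volume of the unit ball in $\mathbb{R}^m$.

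The first step extracts from Theorem~\ref{monotone} a ratio-monotonicity property of $u^*$. Writing $C_m$ for the constant appearing in Theorem~\ref{monotone}, the statement that $g(t)=t\exp(C_m\mu_u(t)^{2/m})$ is non-increasing on $(0,t_\circ)$ is equivalent, via the substitution $t=u^*(s)$ and the identity $\mu_u(u^*(s))=s$ (valid almost everywhere), to the statement that $s\mapsto u^*(s)\exp(C_m s^{2/m})$ is non-decreasing. By the observation immediately following Theorem~\ref{monotone}, $v_0^*(s)\exp(C_m s^{2/m})$ is identically constant, so the ratio $h(s):=u^*(s)/v_0^*(s)$ is non-decreasing on $[0,\infty)$.

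The normalization $\|f\|_{p,\alpha}=\|f_a\|_{p,\alpha}=1$, combined with the layer cake formula, gives $\int_0^\infty u^*(s)\,ds=1/c_{p,\alpha}=\int_0^\infty v_0^*(s)\,ds$, hence $\int_0^\infty(h-1)v_0^*\,ds=0$. Because $h$ is non-decreasing and $v_0^*>0$, $h$ must cross the value $1$ at some point $s_0\in(0,\infty)$, yielding $u^*\le v_0^*$ on $[0,s_0]$ and $u^*\ge v_0^*$ on $[s_0,\infty)$. Together with the equality of total integrals, this one-crossing structure gives the classical majorization inequality $\int_0^t u^*(s)\,ds\le\int_0^t v_0^*(s)\,ds$ for every $t\ge 0$.

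A standard Hardy--Littlewood--P\'olya argument converts this majorization into the desired $G$-inequality. Setting $t_0:=v_0^*(s_0)$ and picking any $\beta\in\partial G(t_0)$, the convex function $\Psi(y):=G(y)-G(t_0)-\beta(y-t_0)$ satisfies $\Psi\ge 0$, is non-decreasing on $[t_0,\infty)$, and non-increasing on $[0,t_0]$. On $[0,s_0]$, $v_0^*\ge u^*\ge t_0$, so $\Psi(v_0^*)\ge\Psi(u^*)$; on $[s_0,\infty)$, $t_0\ge u^*\ge v_0^*$, and again $\Psi(v_0^*)\ge\Psi(u^*)$. Integrating, and observing that $\beta\int(v_0^*-u^*)\,ds=0$ by equality of total masses, yields $\int_0^\infty G(u^*)\,ds\le\int_0^\infty G(v_0^*)\,ds$, equivalent to $\int_{\mathbb{R}^m}G(u)\,dA\le\int_{\mathbb{R}^m}G(v)\,dA$ with equality at $f=f_a$. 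The principal analytic ingredient is Theorem~\ref{monotone}; beyond it, the main step is packaging its content as the ratio-monotonicity of $u^*/v_0^*$, whence the forced one-crossing structure delivers the pointwise shape comparison that drives Hardy--Littlewood--P\'olya.
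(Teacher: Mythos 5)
Your proposal is correct in substance, but it takes a genuinely different route from the paper. The paper never passes to the decreasing rearrangement $u^*$: it works throughout in the level variable $t$ with the distribution function $\mu(t)$, rewrites the constraint $\|f\|_{p,\alpha}=1$ as $c_{p,\alpha}\int_0^{t_\circ}\mu(t)\,dt=1$ and the functional \eqref{bergmanineq} as $c_{p,\alpha}\int_0^{t_\circ}\mu(t)\,G'(t)\,dt$, expresses $\mu(t)=\Phi\bigl(g(t)/t\bigr)/c_{p,\alpha}$ with $\Phi(s)=c_{p,\alpha}\left(\frac{2\pi}{\alpha(\Gamma(m/2))^{2/m}}\log s\right)^{m/2}$, and then invokes the weighted comparison Lemma~\ref{lemm} (imported from \cite{kalaj2024}) with $\Psi=G'$ to conclude that the maximum occurs at $g\equiv 1$. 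Your argument is the ``dual'' of this: you transfer the monotonicity of $g$ into ratio-monotonicity of $u^*/v_0^*$, extract the one-crossing structure, deduce Hardy--Littlewood--P\'olya majorization, and close with a subgradient inequality. In effect you have re-proved Lemma~\ref{lemm} from scratch in the rearrangement variable (its proof is itself a one-crossing argument, with the crossing located in $t$ and the weight $G'(t)-G'(t_1)$ playing the role of your subgradient $\beta$). What your version buys: it is self-contained, it handles arbitrary convex $G$ via subdifferentials rather than requiring $\Psi=G'$ to be positive and increasing as in the lemma's hypotheses, and it makes the extremal comparison geometrically transparent. What the paper's version buys: it is shorter, and by staying with $\mu(t)$ it avoids the rearrangement technicalities you need, such as the a.e.\ identity $\mu_u(u^*(s))=s$ (which requires knowing $\mu$ is continuous and strictly decreasing -- the paper's continuity claim rests on the real-analyticity of $u$ and \cite{zero}).

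Two small points you should patch. First, Theorem~\ref{monotone} requires $u$ to be bounded and to tend to $0$ uniformly at infinity; for a general $f\in\mathcal{L}^p_\alpha$ this is not a hypothesis but a fact that must be proved, and the paper does so via Proposition~\ref{ulqin} -- your proposal applies Theorem~\ref{monotone} without this verification. Second, your identity $\int_{\mathbb{R}^m}G(u)\,dA=\int_0^\infty G(u^*(s))\,ds$ and the final splitting $\int\bigl[G(v_0^*)-G(u^*)\bigr]\,ds\ge\beta\int(v_0^*-u^*)\,ds$ implicitly require $G(0)=0$ (otherwise both sides are infinite, since $u\to 0$ at infinity); the paper handles this by explicitly restricting to the only nontrivial case $\lim_{t\to 0^+}G(t)=0$, and you should state the same reduction.
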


Applying Theorem~\ref{Fock} to the convex and increasing function
$G(t)=t^{q/p}$, we get the extension of theorem \cite[Theorem~2]{carlen} by proving:
\begin{theorem}\label{shtune}
For all $0 < p < q < \infty$ and $0 < \alpha$ and for $f\in \mathcal{L}^p_\alpha(\mathbb{R}^m)$,  we have $f\in \mathcal{L}^q_\alpha(\mathbb{R}^m)$ and
$$\|f\|_{q,\alpha} \le \|f\|_{p, \alpha}$$
with equality for $f_a(x) =e^{\alpha \left<a,x\right>-\frac{\alpha}{2} |a|^2}$, where  $a\in \mathbb{R}^m$ is arbitrary.

\end{theorem}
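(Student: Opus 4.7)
The plan is to deduce Theorem~\ref{shtune} from Theorem~\ref{Fock} by choosing the right convex test function and then performing a routine Gaussian computation for the candidate extremizers $f_a$. First, by homogeneity of both sides it suffices to assume $\|f\|_{p,\alpha}=1$ and prove $\|f\|_{q,\alpha}\le 1$. Let $G(t)=t^{q/p}$, which is defined and convex on $[0,\infty)$ because $q/p>1$. Then
\begin{equation*}
\int_{\mathbb{R}^m} G\bigl(|f(x)|^p e^{-\frac{\alpha p}{2}|x|^2}\bigr)\,dA(x)=\int_{\mathbb{R}^m} |f(x)|^q e^{-\frac{\alpha q}{2}|x|^2}\,dA(x)=\frac{\|f\|_{q,\alpha}^q}{c_{q,\alpha}}.
\end{equation*}
So the right hand side of Theorem~\ref{Fock} directly controls the $\mathcal{L}^q_\alpha$-norm.

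Next, I would compute the integral explicitly for the extremizer $f_a(x)=e^{\alpha\langle a,x\rangle-\frac{\alpha}{2}|a|^2}$ with $a\in\mathbb{R}^m$. Completing the square yields
\begin{equation*}
|f_a(x)|^q e^{-\frac{\alpha q}{2}|x|^2}=e^{q\alpha\langle a,x\rangle-\frac{q\alpha}{2}|a|^2-\frac{\alpha q}{2}|x|^2}=e^{-\frac{\alpha q}{2}|x-a|^2},
\end{equation*}
whose integral over $\mathbb{R}^m$ is $(2\pi/(\alpha q))^{m/2}=1/c_{q,\alpha}$. The identical computation with $q$ replaced by $p$ shows that $\|f_a\|_{p,\alpha}=1$, so $f_a$ is an admissible competitor in the extremal problem of Theorem~\ref{Fock}. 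Hence the maximum value of $\int G(|f|^p e^{-\alpha p|x|^2/2})\,dA(x)$ subject to $\|f\|_{p,\alpha}=1$ is exactly $1/c_{q,\alpha}$, attained by every $f_a$.

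Combining the two observations gives, for any $f\in\mathcal{L}^p_\alpha$ with $\|f\|_{p,\alpha}=1$,
\begin{equation*}
\frac{\|f\|_{q,\alpha}^q}{c_{q,\alpha}}=\int_{\mathbb{R}^m} G\bigl(|f|^p e^{-\frac{\alpha p}{2}|x|^2}\bigr)\,dA(x)\le \frac{1}{c_{q,\alpha}},
\end{equation*}
that is $\|f\|_{q,\alpha}\le 1=\|f\|_{p,\alpha}$. For general $f$, apply this to $f/\|f\|_{p,\alpha}$; equality in the final inequality holds for $f=f_a$, because both norms equal $1$ by the Gaussian computation above. In particular $f\in\mathcal{L}^q_\alpha$.

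There is essentially no main obstacle once Theorem~\ref{Fock} is in hand: the only subtle bookkeeping is tracking the normalization constants $c_{p,\alpha}$ and $c_{q,\alpha}$ and verifying that the Gaussian $f_a$ happens to have unit norm in every $\mathcal{L}^p_\alpha$ simultaneously, which is what makes the extremal value of $G$-integrals equal to $1/c_{q,\alpha}$ rather than something that would obstruct the desired contraction bound.
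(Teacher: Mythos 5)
Your proposal is correct and follows essentially the same route as the paper: normalize to $\|f\|_{p,\alpha}=1$, apply Theorem~\ref{Fock} with the convex function $G(t)=t^{q/p}$, and identify the extremal value of the functional as $1/c_{q,\alpha}$. The only (minor) difference is that you verify the equality case by an explicit Gaussian computation for every $f_a$, whereas the paper evaluates the maximum at $f\equiv 1$ and refers to Zhu's monograph for the equality statement; your version is slightly more self-contained but not a different argument.
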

\begin{proof}[Proof of Theorem~\ref{shtune}] For $ \|f \|_{p, \alpha}=N$, $\|f/N\|_{p,\alpha}=1$ and from Theorem~\ref{Fock} we have $$\int_{\mathbb{R}^m} |f(x)/N|^qe^{-\frac{\alpha}{2} q|x|^2}dA(x)\le \int_{\mathbb{R}^m} e^{-\frac{\alpha}{2} q|x|^2}dA(x) =1/c_{q,\alpha}.$$ Thus $$c_{q,\alpha}\int_{\mathbb{R}^m} |f(x)|^qe^{-\frac{\alpha}{2} q|x|^2}dA(x)\le N^q,$$ or what is the same $$\|f\|_{q,\alpha} \le \|f\|_{p, \alpha}.$$ The equality statement follows from the equality statement of Theorem~\ref{shtune}, but can be proved by using the same approach as in monograph of Zhu \cite[Lemma~2.33]{kezu}.
\end{proof}
\begin{remark}
The last theorem is an extension of Theorem~\ref{kehezu}. Moreover, its proof is different from the proof in \cite{carlen} and seems to be simpler. We refer to the paper \cite{jfa} for some related inequalities for log-subharmonic functions in $\mathbb{R}^n$.
\end{remark}

Theorem~\ref{shtune} is a counterpart of a similar contraction property of Bergman spaces $\mathbf{B}^p_\alpha$ (\cite[p.~2]{hakan}), proved by Kulikov in \cite{kulik} for holomorphic functions in the unit disk and for $\mathcal{M}-$log-subharmonic functions in the unit ball in $\mathbb{R}^n$ by the author in \cite{kalaj2024}. It is known that $$\mathbf{B}^p_\alpha \subset \mathbf{B}^q_\beta,\quad \frac{p}{\alpha} = \frac{q}{\beta} = r,\quad p < q.$$ For $n=2$, it was asked whether these embeddings are  contractions, that is whether the norm $  \|f \|_{\mathbf{B}^{r\alpha}_\alpha}$ is decreasing in $\alpha$. In the case of Bergman spaces, this question was asked by Lieb and Solovej \cite{9}. They proved that such contractivity implies their Wehrl-type entropy conjecture for the $SU(1,1)$ group. In the case of contractions from the Hardy spaces to the Bergman spaces, it was asked by Pavlovi\'c in \cite{13} and by Brevig, Ortega-Cerd\`a, Seip, and Zhao \cite{7} concerning the estimates for analytic functions. The mentioned contraction property proved by Kulikov confirmes these conjectures. An interesting application of Kulikov result has been given by Melentijevi\'c in \cite{petar}.

We end this paper with the construction of a new normed Fock type space:
\begin{definition}[Fock limit space]
Let $f$ be a holomorphic function in $\mathbb{C}^n$. Then for $\alpha>0$ we say $f\in \mathcal{F}_{\alpha}$ if $\displaystyle f\in \bigcap_{p>0} \mathcal{F}_\alpha^{ p}$. Then we define $$\|f\|_\alpha:=\inf_{p> 0} \|f\|_{p, \alpha}.$$
\end{definition}For $\alpha>0$ define as in \cite[eq.~2.2]{kezu} the following Banach norm $$\|f\|_{\infty,\alpha} :=\mathrm{esssup}\{|f(z)|e^{-\frac{\alpha}{2} |z|^2}, z\in\mathbb{C}^n\}.$$
Then we prove
\begin{theorem}\label{fockBanah}
For every $\alpha>0$ we have $$\|f\|_\alpha=\|f\|_{\infty, \alpha}.$$ In particular $(\mathcal{F}_\alpha, \|\cdot \|_\alpha)$ is a normed subspace of Banach space $\mathcal{F}_\alpha^\infty.$
\end{theorem}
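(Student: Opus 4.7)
The plan is to identify the infimum defining $\|f\|_\alpha$ with the limit as $p\to\infty$ and then reduce matters to the classical convergence of $L^p$ norms toward the $L^\infty$ norm. Every entire function is log-subharmonic, hence Theorem~\ref{shtune} applies and shows that $p\mapsto\|f\|_{p,\alpha}$ is nonincreasing on $(0,\infty)$. Consequently, for $f\in\mathcal{F}_\alpha=\bigcap_{p>0}\mathcal{F}_\alpha^p$,
\[
\|f\|_\alpha=\inf_{p>0}\|f\|_{p,\alpha}=\lim_{p\to\infty}\|f\|_{p,\alpha},
\]
and this limit is finite because it is bounded by $\|f\|_{p_0,\alpha}<\infty$ for any single $p_0$. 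Setting $u(z)=|f(z)|e^{-\alpha|z|^2/2}$, which is continuous on $\mathbb{C}^n$, the $p$-norm rewrites as $\|f\|_{p,\alpha}=c_{p,\alpha}^{1/p}\|u\|_{L^p(\mathbb{R}^{2n})}$ with $c_{p,\alpha}^{1/p}=(\alpha p/2\pi)^{n/p}\to 1$. Continuity of $u$ moreover forces $\mathrm{esssup}\,u=\sup u=:M$, so $\|f\|_{\infty,\alpha}=M$.

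For the lower bound I would fix $z_0\in\mathbb{C}^n$ and $\varepsilon>0$; by continuity there is a ball $B=B(z_0,r)$ on which $u\ge u(z_0)-\varepsilon$, whence
\[
\|f\|_{p,\alpha}\ge c_{p,\alpha}^{1/p}|B|^{1/p}\bigl(u(z_0)-\varepsilon\bigr)\longrightarrow u(z_0)-\varepsilon.
\]
Letting $\varepsilon\to 0$ and taking the supremum over $z_0$ gives $\|f\|_\alpha\ge M$; in particular $M<\infty$, which matters in a moment. For the upper bound, pick any $p_0>0$; the elementary pointwise estimate $u^p\le M^{p-p_0}u^{p_0}$ (valid for $p>p_0$ since $0\le u\le M$) yields
\[
\|f\|_{p,\alpha}\le c_{p,\alpha}^{1/p}\,M^{1-p_0/p}\,\|u\|_{L^{p_0}}^{p_0/p}\longrightarrow M,
\]
so $\|f\|_\alpha\le M$. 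Combining the two directions gives $\|f\|_\alpha=\|f\|_{\infty,\alpha}$. The subspace claim is then immediate: $\mathcal{F}_\alpha$ is closed under scalar multiplication trivially and under addition because $|f+g|^p$ is integrable against the Gaussian weight whenever $|f|^p$ and $|g|^p$ are, and the restriction of $\|\cdot\|_{\infty,\alpha}$ from $\mathcal{F}_\alpha^\infty$ is obviously a norm on the subspace.

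The only real subtlety worth naming is that running the upper-bound interpolation cleanly requires $M<\infty$, which is not transparent from the definition of $\mathcal{F}_\alpha$ alone; this finiteness is delivered for free by Theorem~\ref{shtune}, via the chain $M\le\lim_p\|f\|_{p,\alpha}\le\|f\|_{p_0,\alpha}<\infty$ that already emerges from the lower-bound step. Aside from this bookkeeping, the content is nothing more than the classical $L^p\to L^\infty$ limit coupled with the normalization $c_{p,\alpha}^{1/p}\to 1$.
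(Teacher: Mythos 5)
Your proposal is correct, and it takes a genuinely different route from the paper for the harder direction. The paper proves the inequality $\|f\|_{\infty,\alpha}\le\|f\|_\alpha$ via the pointwise mean-value estimate of Proposition~\ref{ulqin} (subharmonicity of $|f|^p$ tested against the Gaussian), and then proves $\|f\|_\alpha\le\|f\|_{\infty,\alpha}$ in two stages: first for polynomials, by splitting the integral over $\{|x|\le R\}$ and $\{|x|>R\}$ and letting $p\to\infty$, and then for general $f$ by approximating $f$ with polynomials in the $\mathcal{F}^2_\alpha$ norm. You instead run the classical $L^{p}\to L^{\infty}$ limit argument directly on $u(z)=|f(z)|e^{-\alpha|z|^2/2}$: small balls plus continuity for the lower bound, and the interpolation $u^p\le M^{p-p_0}u^{p_0}$ for the upper bound, with the normalization $c_{p,\alpha}^{1/p}\to 1$ handling the constants. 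Both arguments lean on Theorem~\ref{shtune} to identify $\inf_p\|f\|_{p,\alpha}$ with $\lim_{p\to\infty}\|f\|_{p,\alpha}$ (you make this explicit; the paper uses it silently when writing $\|f\|_\alpha$ as a limit). Your version buys self-containedness: it avoids the paper's appeal to density of polynomials in the Fock space, a step the paper leaves somewhat sketchy --- its final estimate bounds $\|f\|_\alpha$ by $\|P\|_{\infty,\alpha}+\epsilon$ and never converts $\|P\|_{\infty,\alpha}$ back into $\|f\|_{\infty,\alpha}$, and it also does not justify why $\|f-P\|_\alpha\le\epsilon$ beyond the monotonicity in $p$. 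What the paper's route buys is structural content: the lower bound comes from the same pointwise estimate \eqref{sharine} that feeds the hypotheses of Theorem~\ref{monotone}, so Proposition~\ref{ulqin} does double duty, whereas your continuity argument is specific to this theorem. One small point of care in your write-up, which you handle correctly: the finiteness of $M=\sup u$ is needed before the interpolation step, and you extract it from the lower-bound chain $M\le\|f\|_\alpha\le\|f\|_{p_0,\alpha}<\infty$; the paper gets the same finiteness from \eqref{sharine}.
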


\section{Proof of Theorem \ref{monotone}}
\begin{proof}[Proof of Theorem \ref{monotone}]
We start with the formula
$$\mu(t)=|A_t|=\int_{A_t}dx=\int_t^{\max u}\int_{|u(x)|=\kappa} d\mathcal{H}^{m-1}(x) d\kappa.$$ Then we get
\begin{equation}\label{derlevel}
-\mu'(t) = \int_{u = t}|\nabla u|^{-1}d\mathcal{H}^{m-1}(x)
\end{equation}
along with the claim that $\{x:u(x) = t\} = \partial A_t$ and that this set is a smooth hypersurface for almost all $t\in (0, t_\circ)$. Here $dS=d\mathcal{H}^{m-1}$ is $m-1$ dimensional Hausdorff measure.
These assertions follow the proof of \cite[Lemma~3.2]{3}. We point out that, since $u$ is real analytic, then it is a well-known fact from measure theory that the level set $\{x:u(x) = t\}$ has a zero measure (\cite{zero}), and this is equivalent to the fact that the $\mu$ is continuous.

Following the approach from \cite{3}, our next step is to apply the Cauchy--Schwarz inequality to the $m-1$ dimensional measure of $\partial A_t$:
\begin{equation}\label{CS}|\partial A_t|^2 = \left(\int_{\partial A_t} dS\right)^{2}\le \int_{\partial A_t} |\nabla u|^{-1} dS\int_{\partial A_t} |\nabla u| dS.
\end{equation}

Let $\nu=\nu(x)$ be the outward unit normal to $\partial A_t$ at a point $x$. Note that, $\nabla u$ is parallel to $\nu$, but directed in the opposite direction. Thus we have $|\nabla u| = -\left<\nabla u,\nu\right>$. Also, we note that since for $x\in \partial A_t$ we have $u(x) = t$, we obtain for $x\in \partial A_t$ that
$$\frac{|\nabla u(x)|}{t} = \frac{|\nabla u(x)|}{u} =  \left<\nabla  \log u(x), \nu\right>.$$

Now the second integral on the right-hand side of \eqref{CS} can be evaluated by Gauss's divergence theorem:
\[\begin{split}\int_{\partial A_t} {|\nabla u| |dS|}&=-t  \int_{A_t}\mathrm{div}\left({\nabla\log u(x)}\right) dA(x)
\\&=-t  \int_{A_t}\Delta {\log u(x)} dA(x).\end{split}\]
Now we plug
$u= |f(x)|^pe^{-\frac{\alpha}{2} p |x|^2} $, and calculate $$-t \Delta \log(|f(x)|^p e^{-\frac{\alpha}{2} p |x|^2})=-(p t\Delta \log v-t\frac{\alpha}{2} p \Delta |x|^2)\le 0+mt \alpha p. $$ By using \eqref{derlevel} and \eqref{CS} we obtain
\[\begin{split}|\partial A_t|^2 & \le  (-\mu'(t))\int_{\partial A_t} {|\nabla u|dS}.
\\&\le - m t \alpha p \mu'(t) \mu(t). \end{split}\]
Now we use the isoperimetric inequality for the space: $$ |\partial A_t|^2\ge \pi m^2 |A_t|^{\frac{2(m-1)}{m}}\left({\Gamma(m/2)}\right)^{-\frac{2}{m}}, $$
which implies that
\begin{equation}\label{ypsibaraz} mt \alpha p \mu'(t)\mu(t)+m^2 \pi \mu(t)^{\frac{2(m-1)}{m}}\left({\Gamma(m/2)}\right)^{-\frac{2}{m}}\le 0\end{equation}
with  equality in \eqref{ypsibaraz} if and only if $v(x)=e^{\alpha\left<x,a\right>-\frac{\alpha}{2}|a|^2}$ because in that case $A_t$ is a ball centered at $a$. So
\begin{equation}\label{ypsibaraza}  M(t):=\alpha p \mu'(t)\mu(t)^{\frac{2-m}{m}}+\frac{m\pi  \left({\Gamma(m/2)}\right)^{-\frac{2}{m}}}{t}\le 0.\end{equation}
Since   $\mu(t^\circ)=0$, we obtain that $$G(t)=\int_{t_\circ}^t M(t)dt= m \pi  (\Gamma(m/2))^{-2/m} \log \frac{t}{t_\circ}+\frac{m}{2} \alpha p  \mu^{\frac{2}{m}}(t)$$ is a non-increasing function  for $0\le t<t_\circ$.

 In the case $v(x)\equiv  e^{\alpha\left<a,x\right>-\frac{\alpha}{2}|a|^2}$,  $t_\circ=1$ and $\mu(t_\circ)=0$. Moreover $$g(t):=\exp(G(t)) =t\exp\left[\frac{\alpha p (\Gamma(m/2))^{2/m}}{2\pi}\mu^{2/m}(t) \right]$$ is non-increasing for $0\le t<t_\circ$.

\end{proof}
\begin{remark}
{Note that for the function $f(x) \equiv 1$ or $$f(x) = e^{-\frac{\alpha}{2}  |a|^2} e^{\alpha   \left<a, x\right>},$$ for a fixed $a$, everywhere in the proof above we have  equalities for all values of $p$ and $\alpha$.} Moreover in this case the maximum of $u(x)$ is equal to $1$ and achieved for $x=a$.
\end{remark}

\section{Proof of Theorem \ref{Fock}}\label{sec4}
We need the following lemma \begin{lemma}\label{lemm}\cite{kalaj2024}
Assume that $\Phi, \Psi$ are positive increasing functions and $g$ positive non-increasing such that
$$\int_0^{t_\circ} \Phi\left({g(t)}/{t}\right)dt = \int_0^{t_\circ} \Phi\left({1}/{t}\right)dt=c.$$

Then $$\int_0^{t_\circ} \Phi\left({g(t)}/{t}\right)\Psi(t)dt\le \int_0^{t_\circ} \Phi\left({1}/{t}\right)\Psi(t)dt.$$
\end{lemma}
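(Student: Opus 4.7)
The plan is to exploit the monotonicity assumptions to reduce the inequality to a standard \emph{crossing} (or rearrangement) argument. First, I would set
\[
h(t) := \Phi\!\left(\frac{g(t)}{t}\right) - \Phi\!\left(\frac{1}{t}\right)
\]
and observe that, because $\Phi$ is increasing, the sign of $h(t)$ equals the sign of
\[
\frac{g(t)}{t} - \frac{1}{t} = \frac{g(t)-1}{t},
\]
i.e.\ the sign of $g(t)-1$. Since $g$ is non-increasing and positive, the set $\{t\in(0,t_\circ): g(t)\ge 1\}$ is an initial segment $[0,t^*]$ and its complement $(t^*,t_\circ]$ is where $g(t)\le 1$, for the threshold $t^*:=\sup\{t:g(t)\ge 1\}$. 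The normalization hypothesis $\int_0^{t_\circ}h(t)\,dt=0$ rules out the degenerate cases $t^*=0$ or $t^*=t_\circ$, because in those cases $h$ would have a strict sign on a set of positive measure.

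Second, to convert this sign pattern into the weighted inequality, I would use the integral equality to recenter $\Psi$ and write
\[
\int_0^{t_\circ} h(t)\Psi(t)\,dt \;=\; \int_0^{t_\circ} h(t)\bigl(\Psi(t)-\Psi(t^*)\bigr)\,dt.
\]
On $[0,t^*]$ we have $h(t)\ge 0$ while $\Psi(t)-\Psi(t^*)\le 0$ (since $\Psi$ is increasing), so the integrand is $\le 0$. On $[t^*,t_\circ]$ we have $h(t)\le 0$ while $\Psi(t)-\Psi(t^*)\ge 0$, so the integrand is again $\le 0$. Adding the two contributions gives the desired conclusion
\[
\int_0^{t_\circ} \Phi\!\left(\frac{g(t)}{t}\right)\Psi(t)\,dt \;\le\; \int_0^{t_\circ} \Phi\!\left(\frac{1}{t}\right)\Psi(t)\,dt.
\]

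The only mild subtlety I anticipate is handling the case when $g$ is merely non-increasing (possibly with jumps) rather than continuous, so that $t^*$ may be a plateau value; this is overcome by choosing the supremum definition of $t^*$ above and noting that the sign description of $h$ remains valid almost everywhere, which is all that is needed for the integral estimate. Apart from this bookkeeping, the argument is the classical observation that an increasing weight applied to a zero-mean function which is first non-negative and then non-positive cannot increase its integral.
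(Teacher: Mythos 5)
Your proof is correct and is essentially the same crossing-point argument by which this lemma is established in the cited source \cite{kalaj2024} (following \cite{kulik}) --- the present paper itself states the lemma without proof, deferring to that reference, and your recentering of $\Psi$ at the crossing value $t^*$ is exactly the mechanism used there. One small inaccuracy: the zero-mean hypothesis does not actually ``rule out'' the degenerate cases $t^*=0$ or $t^*=t_\circ$ (for instance $g\equiv 1$ gives $t^*=t_\circ$ with $h\equiv 0$, and more generally one-signed $h$ with zero mean forces $h=0$ a.e.\ rather than a contradiction), but no such exclusion is needed, since your inequality $h(t)\bigl(\Psi(t)-\Psi(t^*)\bigr)\le 0$ holds verbatim in those cases as well.
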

As in \cite{kulik, kalaj2024}, where is treated Bergman version of this theorem, we restrict ourselves to the only nontrivial case $\lim_{t\to 0^{+}}G(t)  = 0$.
Let $\mu(t) = \mu(\{ x: u(x) > t\})$ be the Lebesgue measure in $\mathbb{R}^m$, where $u(x) = |f(x)|^pe^{-\frac{\alpha p}{2} |x|^2} $.
Applying Theorem \ref{monotone} to $f$, we get that the function
$$g(t) =t\exp\left[\frac{\alpha (\Gamma(m/2))^{2/m}}{2\pi}\mu^{2/m}(t)  \right],$$
is decreasing on $(0, t_\circ)$ with $t_\circ = \max_{x\in \mathbb{R}^m}u(x)$. Proposition~\ref{ulqin} below ensures  the existence of $t_\circ$.

For $f\equiv 1$, $g$ is a constant function equal to $1$.

Then $$\mu(t)=\left(\frac{2\pi}{\alpha (\Gamma(m/2))^{2/m}}\log \frac{g(t)}{t}\right)^{\frac{m}{2}}.$$
We assume that $ \|f \|_{p, \alpha} = 1$, that is
$$I_1=c_{p, \alpha}\int_0^{t_\circ} \mu(t)dt =c_{p, \alpha}\int_0^{t_\circ}\left(\frac{2\pi}{\alpha (\Gamma(m/2))^{2/m}}\log \frac{g(t)}{t}\right)^{m/2}dt= 1.$$
Now the integral in \eqref{bergmanineq} can be rewritten as
$$I_2=c_{p, \alpha}\int_0^{t_\circ} \left(\frac{2\pi}{\alpha (\Gamma(m/2))^{2/m}}\log \frac{g(t)}{t}\right)^{m/2}G'(t)dt.$$
Then by  Lemma~\ref{lemm}, by taking $\Phi(s) = c_{p, \alpha}\left(\frac{2\pi}{\alpha (\Gamma(m/2))^{2/m}}\log s\right)^{\frac{m}{2}}$ and $\Psi(t)=G'(t)$,  the maximum of $I_2$ under $I_1=1$ is attained for $g\equiv 1$.

\section{Some additional properties of Fock space and proof of Theorem~\ref{fockBanah}}
Now we prove the following proposition used in the proof of our main result.
\begin{proposition}
 \label{ulqin}
Assume that $f$ is a real-analytic log-subharmonic function in $\mathbb{R}^m$ belonging to the Fock type space. Then for every $x$,  \begin{equation}\label{sharine}|f(x)|^pe^{-\frac{\alpha p}{2} |x|^2}\le c_{p, \alpha} \int_{\mathbb{R}^m} |f(y)|^p e^{-\frac{\alpha p}{2} |y|^2} dA(y).\end{equation}
Moreover \begin{equation}\label{shark}\lim_{|x|\to \infty} |f(x)|e^{-\frac{\alpha}{2} |x|^2}=0.\end{equation}

\end{proposition}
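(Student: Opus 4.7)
My plan is to reduce the general pointwise estimate \eqref{sharine} to the special case $x=0$, where it follows from applying the submean value property of $|f|^p$ against a Gaussian weight, and then transfer the result to an arbitrary $x\in\mathbb{R}^m$ via a Gaussian modulation of the translate of $f$ that preserves both log-subharmonicity and the $\mathcal{L}^p_\alpha$ norm.

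For the case $x=0$, since $\log|f|$ is subharmonic and $t\mapsto e^{pt}$ is convex and increasing, $|f|^p=e^{p\log|f|}$ is subharmonic on $\mathbb{R}^m$ (the zero set of $f$ causes no issue, since $|f|^p$ is continuous and vanishes there). The spherical submean inequality $|f(0)|^p\,|S(0,r)|\le \int_{S(0,r)}|f|^p\,d\mathcal{H}^{m-1}$ then holds for all $r>0$. Multiplying by $e^{-\alpha p r^2/2}$, integrating over $r\in(0,\infty)$, converting to Cartesian coordinates on the right, and using $c_{p,\alpha}\int_{\mathbb{R}^m} e^{-\alpha p|y|^2/2}dA(y)=1$ gives
\[
|f(0)|^p \le c_{p,\alpha}\int_{\mathbb{R}^m}|f(y)|^p e^{-\frac{\alpha p}{2}|y|^2}dA(y)=\|f\|_{p,\alpha}^p.
\]

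For a general point $x\in\mathbb{R}^m$, I introduce the modulated translate
\[
g(y)\bydef f(y+x)\,\exp\!\left(-\alpha\langle x,y\rangle - \frac{\alpha}{2}|x|^2\right).
\]
The modulating factor is real analytic and its logarithm is harmonic in $y$, so a direct computation gives $\Delta_y\log|g(y)|=(\Delta\log|f|)(y+x)\ge 0$; hence $|g|$ is again a real analytic log-subharmonic function. Expanding the exponent produces the key identity
\[
|g(y)|^p e^{-\frac{\alpha p}{2}|y|^2}=|f(y+x)|^p e^{-\frac{\alpha p}{2}|y+x|^2},
\]
so applying the origin estimate to $g$ and substituting $z=y+x$ on the right yields
\[
|f(x)|^p e^{-\frac{\alpha p}{2}|x|^2}=|g(0)|^p \le c_{p,\alpha}\int_{\mathbb{R}^m}|f(z)|^p e^{-\frac{\alpha p}{2}|z|^2}dA(z)=\|f\|_{p,\alpha}^p,
\]
which is \eqref{sharine}.

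The uniform decay \eqref{shark} follows by localizing the previous argument: restricting the $r$-integration in the submean step to $[0,R]$ produces, after the same manipulations, a positive constant $C_R$ such that
\[
F(x) \bydef |f(x)|^p e^{-\frac{\alpha p}{2}|x|^2} \le C_R \int_{B(x,R)} F(z)\,dA(z).
\]
Since the assumption $f\in\mathcal{L}^p_\alpha$ means $F\in L^1(\mathbb{R}^m)$, the right-hand side tends to $0$ as $|x|\to\infty$ by absolute continuity of the Lebesgue integral, and extracting a $p$-th root delivers \eqref{shark}. The one point demanding genuine thought is the choice of modulating factor: its modulus converts the Gaussian weight at $y$ into the Gaussian weight at $y+x$, while its logarithm is harmonic and thus preserves log-subharmonicity, which are exactly the twin requirements making the reduction to the origin work.
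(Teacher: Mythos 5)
Your proposal is correct and follows essentially the same route as the paper: both reduce to the origin via the modulated translate $g(y)=f(y+x)e^{-\alpha\langle x,y\rangle-\frac{\alpha}{2}|x|^2}$ (the paper uses the equivalent factor $e^{-\alpha\langle y+x,x\rangle}$, differing only by a constant), exploit that the modulation has harmonic logarithm so $|g|^p$ stays subharmonic, apply the sub-mean value inequality against the radial Gaussian weight, and prove the decay \eqref{shark} by the same localization to a ball $B(x,R)$ combined with the vanishing of tails of the integrable function $|f|^p e^{-\frac{\alpha p}{2}|y|^2}$. Your write-up is in fact slightly more careful than the paper's, since you justify the weighted sub-mean inequality by integrating spherical means and you address subharmonicity of $|f|^p$ across the zero set of $f$ explicitly.
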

Notice that \eqref{sharine} extends \cite[Theorem~2.7]{kezu} and the relation \eqref{shark} extends corresponding relation in \cite[p.~38]{kezu}.
\begin{proof}

Let $g(y) = |f(x+y)|^p e^{-\alpha p \left<(y+x),x\right>}.$
Now use the mean value property to the log-subharmonic function $g$ (it is also subharmonic).
$$|g(0)| \le c_{p, \alpha}\int_{ \mathbb{R}^m} |g(y)| e^{-\frac{\alpha p}{2} |y|^2} dA(y).$$

Then we have
$$g(0)=|f(x)|^p e^{-\alpha p |x|^2}\le c_{p, \alpha}\int_{ \mathbb{R}^m} f^p(y+x)e ^{-\frac{\alpha}{2} p \left<(x+y), x\right>} e^{-\frac{\alpha p}{2} |y|^2} dA(y).$$

Therefore
$$|f(x)|^p e^{-{\alpha} p |x|^2}\le  c_{p, \alpha}\int_{ \mathbb{R}^m} f^p(y)e ^{-\alpha p \left<y,x\right>} e^{-\frac{\alpha p}{2} |y-x|^2} dA(y).$$

So $$|f(x)|^p e^{-\frac{\alpha p}{2} |x|^2}\le   c_{p, \alpha}\int_{ \mathbb{R}^m} f^p(y) e^{-\frac{\alpha p}{2} |y|^2} dA(y).$$
Now, to prove \eqref{shark}, we use the following inequality, which is also a consequence of the sub-mean value property of subharmonic functions. Let $B_1(x)=\{y\in\mathbb{R}^m: |y-x|<1\}$. Then for every subharmonic function $g$ we have
$$|g(0)|  \le \frac{n}{\omega_n} \int_{B_1(0)} |g(y)| dA(y).$$
Thus
\begin{equation}\label{balline}|g(0)| e^{-\frac{\alpha p}{2} }  \le \frac{n}{\omega_n} \int_{B_1(0)} |g(y)| e^{-\frac{\alpha p}{2} |y|^2} dA(y).\end{equation}



By applying the previous inequality for $g(y)= |f(x+y)|^p  e^{-\alpha p \left<(y+x),x\right>},$ we obtain from \eqref{balline} that

\[\begin{split} |f(x)|^p e^{-\alpha p |x|^2 }e^{-\frac{\alpha p}{2} }  &\le {\frac{n}{\omega_n}} \int_{B_1(0)} |f(x+y)|^p e^{-\alpha p \left<(y+x),x\right>} e^{-\frac{\alpha p}{2} |y|^2}dA(y)
\\&=\frac{n}{\omega_n}\int_{B_1(x)} |f(y)|^p e ^{-\alpha p \left<y,x\right>} e^{-\frac{\alpha p}{2} |y-x|^2} dA(y)\\&=
\frac{n}{\omega_n}e^{-\frac{\alpha p}{2} |x|^2 }\int_{B_1(x)} |f(y)|^p e^{-\frac{\alpha p}{2} |y|^2} dA(y).
 \end{split}\]
 Thus $$|f(x)|^p e^{-\frac{\alpha p}{2} |x|^2 }e^{-\frac{\alpha p}{2} }\le \frac{n}{\omega_n}\int_{B_1(x)} |f(y)|^p e^{-\frac{\alpha p}{2} |y|^2} dA(y).$$
 Since $f\in \mathcal{L}_\alpha^p$, it follows that $$\lim_{|x|\to \infty} \frac{n}{\omega_n}\int_{B_1(x)} |f(y)|^p e^{-\frac{\alpha p}{2} |y|^2} dA(y)=0.$$ This implies  \eqref{shark}.
\end{proof}

It follows from the following lemma that $\|f\|_\alpha$ is a norm on $\mathcal{F}_\alpha$. Theorem~\ref{fockBanah} is a direct application of the following lemma
\begin{lemma}

a) If $f,g\in \mathcal{F}_{\alpha}$, then $\|f+g\|_\alpha\le \|f\|_\alpha+\|g\|_\alpha$.

b)  For every $\alpha>0$ and  $f\in \mathcal{F}_{\alpha}$ and $x\in \mathbb{C}^m$ we have $|f(x)|e^{-\frac{\alpha}{2}|x|^2} \le \|f\|_\alpha$.

c) For every $\alpha>0$ and  $f\in \mathcal{F}_{\alpha}$, $\|f\|_{\alpha}=\sup_{x\in \mathbb{C}^n} \left(|f(x)|e^{-\frac{\alpha}{2}|x|^2}\right).$

\end{lemma}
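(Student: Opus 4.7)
My plan is to prove parts (b), (c), (a) in that order, since (c) identifies $\|\cdot\|_\alpha$ with the weighted sup norm, after which (a) reduces to the triangle inequality for a sup.

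For (b), I apply Proposition~\ref{ulqin} to $f$ at exponent $p$: for every $p > 0$,
$$|f(x)|^p e^{-\frac{\alpha p}{2}|x|^2} \le c_{p,\alpha} \int_{\mathbb{R}^{m}} |f(y)|^p e^{-\frac{\alpha p}{2}|y|^2}\, dA(y) = \|f\|_{p,\alpha}^p.$$
Taking the $p$-th root and then the infimum over $p > 0$ yields $|f(x)|e^{-\frac{\alpha}{2}|x|^2} \le \|f\|_\alpha$, proving (b).

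For (c), one direction — $\sup_{x \in \mathbb{C}^n}|f(x)|e^{-\frac{\alpha}{2}|x|^2} \le \|f\|_\alpha$ — is immediate from (b). For the reverse, Theorem~\ref{shtune} tells us that $p \mapsto \|f\|_{p,\alpha}$ is non-increasing, so $\|f\|_\alpha = \lim_{p\to\infty}\|f\|_{p,\alpha}$. Writing $h(x) := |f(x)|e^{-\frac{\alpha}{2}|x|^2}$, I express
$$\|f\|_{p,\alpha} = c_{p,\alpha}^{1/p}\left(\int_{\mathbb{R}^{m}} h(x)^p\, dA(x)\right)^{1/p}.$$
Since $c_{p,\alpha}^{1/p} = (\alpha p/(2\pi))^{m/(2p)} \to 1$ as $p \to \infty$, the problem reduces to the classical convergence $\|h\|_{L^p} \to \|h\|_\infty$. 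The upper bound comes from H\"older: for any fixed $p_0 > 0$, $\|h\|_{L^p} \le \|h\|_\infty^{1-p_0/p}\|h\|_{L^{p_0}}^{p_0/p}$, where $\|h\|_{L^{p_0}} < \infty$ because $f \in \mathcal{F}_\alpha^{p_0}$ and $\|h\|_\infty < \infty$ by (b). For the lower bound, fix $\epsilon \in (0, \|h\|_\infty)$; since $h$ is continuous ($f$ is entire), the set $\{h > \|h\|_\infty - \epsilon\}$ is open and nonempty, hence has positive Lebesgue measure $\delta_\epsilon$, and $\|h\|_{L^p} \ge (\|h\|_\infty - \epsilon)\,\delta_\epsilon^{1/p} \to \|h\|_\infty - \epsilon$ as $p \to \infty$. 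Combining the two bounds gives $\|f\|_\alpha = \|h\|_\infty = \|f\|_{\infty,\alpha}$, proving (c).

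Part (a) is then an immediate consequence of (c) and the pointwise inequality $|f(x)+g(x)| \le |f(x)| + |g(x)|$: multiplying by $e^{-\frac{\alpha}{2}|x|^2}$ and taking $\sup_x$ gives
$$\|f+g\|_\alpha = \|f+g\|_{\infty,\alpha} \le \|f\|_{\infty,\alpha} + \|g\|_{\infty,\alpha} = \|f\|_\alpha + \|g\|_\alpha.$$
The only nontrivial point in the whole argument is the $L^p \to L^\infty$ convergence in (c); there one must handle the prefactor $c_{p,\alpha}^{1/p}$ separately and exploit continuity of $h$ so that the essential sup coincides with the pointwise sup, which is exactly where the hypothesis that $f$ be holomorphic (and hence $h$ continuous) is used.
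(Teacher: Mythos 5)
Your proof is correct, but it takes a genuinely different route from the paper on the crucial part (c). Part (b) is identical to the paper's (both read it off Proposition~\ref{ulqin} after taking $p$-th roots and the infimum over $p$), and both arguments use Theorem~\ref{shtune} to identify $\|f\|_\alpha=\inf_p\|f\|_{p,\alpha}$ with $\lim_{p\to\infty}\|f\|_{p,\alpha}$. For the reverse inequality in (c), however, the paper proceeds through polynomials: it first proves $\|P\|_\alpha\le\|P\|_{\infty,\alpha}$ for a polynomial $P$ by splitting $\|P\|_{p,\alpha}^p$ over $\{|x|\le R\}$ and $\{|x|>R\}$ and estimating the tail, and then extends to general $f$ by invoking density of polynomials in $\mathcal{F}^2_\alpha$ together with the triangle inequality. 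You instead factor $\|f\|_{p,\alpha}=c_{p,\alpha}^{1/p}\,\|h\|_{L^p}$ with $h(x)=|f(x)|e^{-\frac{\alpha}{2}|x|^2}$, note $c_{p,\alpha}^{1/p}\to1$, and run the classical $L^{p_0}\cap L^\infty$ argument ($\|h\|_{L^p}\to\|h\|_{L^\infty}$ via the interpolation upper bound and the superlevel-set lower bound, where continuity of $h$ makes the essential sup a genuine sup). Your route is more self-contained and elementary: it avoids the density-of-polynomials machinery, which the paper uses without proof and whose final approximation chain as printed is in fact somewhat garbled, and it makes part (a) an immediate corollary of (c) via the pointwise triangle inequality, whereas the paper proves (a) separately by passing the $\mathcal{F}^p_\alpha$ triangle inequality to the limit $p\to\infty$ (legitimate only because large $p\ge1$ is what matters there). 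The only points worth tightening in your write-up are routine: in the lower bound you should note that the set $\{h>\|h\|_\infty-\epsilon\}$ has \emph{finite} measure (e.g.\ by Chebyshev with exponent $p_0$), so that $\delta_\epsilon^{1/p}\to1$, and in (a) you should remark that $f+g\in\mathcal{F}_\alpha$ (each $\mathcal{F}^p_\alpha$ is a vector space), so that (c) indeed applies to $f+g$.
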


\begin{proof}
Let us restrict ourselves to the case $n=1$. The general case is a trivial modification of this case.

a) Let $f, g\in \mathcal{F}_\alpha$. Then for every $\alpha>0$, $f, g\in \mathcal{F}^p_{\alpha}$ and by the triangle inequality for the norm in $\mathcal{F}^p_{\alpha}$ we obtain  \[\begin{split}\|f+g\|_\alpha&=\lim_{p\to \infty}\|f+g\|_{p, \alpha}\\&\le \lim_{p \to \infty}\|f\|_{p, \alpha}+\lim_{p \to \infty}\|g\|_{p, \alpha}\\&=\|f\|_\alpha+\|g\|_\alpha.\end{split}\]

b) This follows from Proposition~\ref{ulqin}.

c) It follows from \eqref{sharine} that $$|f(x)|e^{-\frac{\alpha }{2} |x|^2}\le \|f\|_{p,\alpha}.$$ By letting $p\to \infty$ we obtain $$|f(x)|e^{-\frac{\alpha }{2} |x|^2}\le \|f\|_{\alpha}.$$ Thus $$\mathrm{ess}\,\sup |f(x)|e^{-\frac{\alpha }{2} |x|^2}\le \|f\|_\alpha.$$
To prove the converse, fix an $R>0$ and assume first that $f=P$ is a polynomial. Then $$\|P\|^p_{p,\alpha}=\int_{|x|\le R}|P(x)|^{p}e^{-\frac{\alpha}{2} p |x|^2}dx+\int_{|x|>R}|P(x)|^{p}e^{-\frac{\alpha}{2} p |x|^2}dx.$$

Moreover for sufficiently large  $R$ $$I(R):=\int_{|x|>R}|P(x)|^{p}e^{-\frac{\alpha}{2} p |x|^2}dx\le c_P \int_{|x|>R}|z|^{n_P p}e^{-\frac{\alpha}{2} p |x|^2}dx$$ and the last expression is smaller than $\|F\|^p_{\infty, \alpha}$. In fact the last expression tends to zero as $R\to \infty$. Therefore

$$\|P\|_{p,\alpha}\le (\|P\|^p_{\infty,\alpha}  R^n \omega_n+\|P\|^p_{\infty, \alpha})^{1/p},$$ where $\omega_n$ is the meausre of the unit sphere. Thus $$\|P\|_\alpha=\lim_{p\to \infty} \|P\|_{p,\alpha}\le \|P\|_{\infty, \alpha}.$$ Thus if $f$ is a polynomial, then \begin{equation}\label{infp}\|f\|_\alpha=\|f\|_{\infty,\alpha}.\end{equation}

Further, if $f$ is not a polynomial and $\epsilon>0$ is arbitrary, then for $p=2$, there exists a polynomial $P$ so that $\|P-f\|_{p,\alpha}<\epsilon$. Moreover, $$\|f\|_\alpha\le \|P\|_\alpha+\|f-P\|_\alpha=\|P\|_{\infty,\alpha}+\|f-P\|_\alpha\le \|P\|_\alpha+\epsilon. $$ Since $\epsilon$ is arbitrary, we conclude that \eqref{infp} hold for every function $f\in \mathcal{F}_\alpha$.

\end{proof}
\begin{remark}
One can ask, given a holomorphic function $f$, when this $$\lim_{p \to 0}\|f\|_{\alpha , p}$$ exists.  The answer is that limit is infinity except for the case when $f\equiv const$, so we cannot produce a Hardy type space for holomorphic mappings in $\mathbb{C}^n$.
\end{remark}

\subsection*{Acknowledgments} After I proved my main results, Kehe Zhu directed me to the reference by Carlen \cite{carlen}, for which I am thankful. I am also grateful to Djordjije Vujadinovic for his valuable insights and discussions on the topic.


\begin{thebibliography}{99}
\bibitem{bargman}
Bargmann, V.
\emph{Remarks on a Hilbert space of analytic functions.}
Proc. Natl. Acad. Sci. USA 48, 199-204 (1962).
\bibitem{bargman1}
Bargmann, V., \emph{On a Hilbert space of analytic functions and an associated integral transform,}
Commun. Pure and Applied Math., 14, 187-214 (1961).
\bibitem{7}O.~F. Brevig, J.~Ortega-Cerd\`a, K.~Seip, and J.~Zhao, \emph{ Contractive
  inequalities for {H}ardy spaces}, { Funct. Approx. Comment. Math.},
  vol.~59, no.~1, pp.~41--56, 2018.
\bibitem{carlen}
E. A. Carlen,
\emph{Some integral identities and inequalities for entire functions and their application to the coherent state transform.}
J. Funct. Anal. 97, No. 1, 231-249 (1991).
\bibitem{frank}
R. L. Frank, \emph{Sharp inequalities for coherent states and their optimizers.}
Adv. Nonlinear Stud. 23, Article ID 20220050, 28 p. (2023).
\bibitem{jfa}
P. Graczyk,
T.  Kemp, Todd; J.-J. Loeb,
\emph{Hypercontractivity for log-subharmonic functions.}
J. Funct. Anal. 258, No. 6, 1785-1805 (2010).
\bibitem{hake}
W. K. Hayman, P. B.  Kennedy, \emph{Subharmonic functions. Vol. I.}
London Mathematical Society Monographs. No. 9. London-New York-San Francisco: Academic Press, a subsidiary of Harcourt Brace Jovanovich, Publishers, XVII, 284 p. (1976).

\bibitem{hakan}
H. Hedenmalm, B. Korenblum, K. Zhu,  \emph{Theory of Bergman Spaces} Graduate Texts in Mathematics. 199. New York, NY: Springer. ix, 286 p. (2000).

    \bibitem{kalaj2024} D.  Kalaj,
\emph{Contraction property of certain classes of $\log-\mathcal{M}$-subharmonic functions in the unit ball in $\mathbb{R}^n$.} J. Funct. Anal. 286, No. 1, Article ID 110203, 29 p. (2024).
arXiv:2207.02054.
\bibitem{kulik} A.~Kulikov, \emph{Functionals with extrema at reproducing kernels}. arXiv:2203.12349, Geometric and Functional Analysis \textbf{32}, 938--949 (2022).

\bibitem{KNOT} A. Kulikov, F. Nicola, J. Ortega-Cerd\`a and P. Tilli, \emph{A monotonicity theorem for subharmonic functions on manifolds},  arXiv preprint arXiv:2212.14008, 2022.
\bibitem{state}E.H.~Lieb,  \emph{Proof of Wehrl's entropy conjecture,} Comm. Math. Phys. 62 (1978) 35-40.
\bibitem{9}E.H.~Lieb and J.P.~Solovej, {\em Wehrl-type coherent state entropy inequalities for {S}{U}(1,1) and its {A}{X}+{B} subgroup}, {\em Partial differential equations, spectral theory, and mathematical physics" the Ari Laptev anniversary volume}. 301--314, EMS Ser. Congr. Rep., EMS Press, Berlin, 2021.
\bibitem{petar}
P. Melentijevi\'c,
\emph{Hypercontractive inequalities for weighted Bergman spaces.}
Bull. Lond. Math. Soc. 55, No. 6, 2611-2616 (2023).
\bibitem{zero}
B. S. Mityagin,  \emph{The zero zet of a real analytic function.} Math Notes 107, 529--530 (2020). https://doi.org/10.1134/S0001434620030189

\bibitem{3} F.~Nicola, P.~Tilli, \emph{The Faber-Krahn inequality for the short-time Fourier transform.} Invent. Math. 230, No. 1, 1-30 (2022).
\bibitem{16}R.~Osserman, \emph{ The isoperimetric inequality}. { Bull. Amer. Math. Soc.}, 84(6), pp. 1182--1238, 1978.
\bibitem{13}M.~Pavlovi\'c, \emph{ Function classes on the unit disk}. {De Gruyter Studies in Mathematics}, vol.~52, De Gruyter, Berlin, 2014.
\bibitem{blms}
J. P. G. Ramos, P.  Tilli,
\emph{A Faber-Krahn inequality for wavelet transforms.}
Bull. Lond. Math. Soc. 55, No. 4, 2018-2034 (2023).
\bibitem{kezu}
K. Zhu,
\emph{Analysis on Fock spaces. (English)}
Graduate Texts in Mathematics 263. New York, NY: Springer (ISBN 978-1-4419-8800-3/hbk; 978-1-4419-8801-0/ebook). x, 344 p. (2012).
\end{thebibliography}
\end{document}